\DeclareMathOperator{\tb}{tb}
\DeclareMathOperator{\rot}{rot}
\DeclareMathOperator{\lk}{lk}
\DeclareMathOperator{\slf}{sl}
\newcommand{\R}{\mathbb{R}}
\newcommand{\Z}{\mathbb{Z}}
\newcommand{\N}{\mathbb{N}}
\newcommand{\xist}{\xi_{\mathrm{st}}}
\newcommand{\xiot}{\xi_{\mathrm{ot}}}
\newcommand{\Dst}{\mathcal{D}_{\mathrm{st}}}
\newcommand{\Est}{\mathcal{E}_{\mathrm{st}}}
\newcommand{\D}{\mathcal{D}}
\newcommand{\E}{\mathcal{E}}
\newcommand{\W}{\mathcal{W}}
\newcommand{\Tstab}{T_{\mathrm{stab}}}
\newcommand{\Kstab}{K_{\mathrm{stab}}}
\newtheoremstyle{thm}{}{}{\itshape}{}{\bfseries}{}{ }{} %Thereom style
\newtheoremstyle{definition}{}{}{}{}{\bfseries}{}{ }{} %Definition style
\theoremstyle{thm}
\newtheorem{Theorem}{Theorem}[section]
\newtheorem{thm}[Theorem]{Theorem}
\newtheorem{lem}[Theorem]{Lemma}
\newtheorem*{Theorem-ohne}{Theorem}
\theoremstyle{definition}
\newtheorem{defi}[Theorem]{Definition}
\newtheorem{ex}[Theorem]{Example}
\definecolor{amaranth}{rgb}{0.9, 0.17, 0.31} %dark red
\definecolor{carrotorange}{rgb}{0.93, 0.57, 0.13} %orange
\definecolor{citrine}{rgb}{0.89, 0.82, 0.04} %dark yellow
\definecolor{dartmouthgreen}{rgb}{0.05, 0.5, 0.06} %green
\definecolor{ballblue}{rgb}{0.13, 0.67, 0.8} %blue
\definecolor{ceruleanblue}{rgb}{0.16, 0.32, 0.75} %deeper blue
\definecolor{amethyst}{rgb}{0.6, 0.4, 0.8} %purple
\definecolor{amber}{rgb}{1.0, 0.75, 0.0} %amber
\definecolor{burlywood}{rgb}{0.87, 0.72, 0.53} %beigebrown
\begin{document}

%%%%%%%%%%%%%%%%%%%%%%%%%%%%% Titel und Author %%%%%%%%%%%%%%%%%%%%%%%%%%%%%%%%%%%%

\title[Non-isotopic transverse tori in Engel manifolds]{Non-isotopic transverse tori in Engel manifolds} %Titel auf Seitenüberschriften darf nicht zu lang sein.

\author{Marc Kegel}
\address{Humboldt-Universit\"at zu Berlin, Rudower Chaussee 25, 12489 Berlin, Germany.}
\email{kegemarc@math.hu-berlin.de, kegelmarc87@gmail.com}

%%%%%%%%%%%%%%%%%%%%%%%%%%%%% Abstract %%%%%%%%%%%%%%%%%%%%%%%%%%%%%%%%%%%%

\begin{abstract}
In every Engel manifold we construct an infinite family of pairwise non-isotopic transverse tori that are all smoothly isotopic. To distinguish the transverse tori in the family we introduce a homological invariant of transverse tori that is similar to the self-linking number for transverse knots in contact $3$-manifolds. Analogous results are presented for Legendrian tori in even contact $4$-manifolds. 
\end{abstract}

\date{\today} % Datum wird untern auf erster Seite angezeigt

\keywords{Engel manifolds, transverse tori, linking of tori} %Stichworte werden unten auf erster Seite angezeigt

\subjclass[2020]{57R15; 57K45, 53D35, 57K33, 57R25} % Mathematical subject classification

\maketitle

%%%%%%%%%%%%%%%%%%%%%%%%%%%%%%%%%%%%%%%%%% INTRODUCTION %%%%%%%%%%%%%%%%%%%%%%%%%%%%%

\section{Introduction}

In recent years, some spectacular breakthroughs in the theory of Engel structures were obtained that have renewed interest in the field. After the solution of the existence question for Engel structures on parallelizable 4-manifolds by Vogel~\cite{Vo09}, a number of flexibility results, formulated in the language of $h$-principles, were proven~\cite{CPPP17,PV20,CPP20,PP19}. (For more details consult Section~\ref{section:back} and the references therein.)

In~\cite{PV20} del Pino and Vogel have introduced the notion of an overtwisted Engel structure, fulfilling an h-principle. $2$-tori transverse to an Engel structure play a crucial role in this construction, similar as transverse knots played a key role in the construction of overtwisted contact structures by Martinet~\cite{Ma71,Ge08}.
% and in the proof of Eliashberg's $h$-principle for overtwisted contact structures~\cite{El89}. 
This suggests that transverse tori in Engel manifolds will play a similar prominent role as transverse knots in contact $3$-manifolds as started with Bennequin's work~\cite{Be82}. This would fit with the observation that in many cases a lot of the underlying topology and geometry of manifolds is encoded in its codimension-$2$ knot theory. The aim of this article is to show that the knot theory of transverse tori is rich in examples that are smoothly isotopic but not isotopic as transverse tori. %Our main result can be stated as follows.

%The aim of this article is to show that the knot theory of transverse tori is rich in examples and, in particular, contains a much larger set of isotopy classes than only the smoothly knotted tori. Our main result can be stated as follows.

\begin{thm}\label{thm:mainEng}
In any Engel manifold $(M,\D)$ there exist infinitely many pairwise non-isotopic transverse $2$-tori that are all smoothly isotopic.
\end{thm}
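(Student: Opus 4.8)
The plan is to build the family by iterating a local \emph{transverse stabilization} of a single transverse torus, separating its members by the self-linking-type invariant $\slf$ announced in the abstract.

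First I would fix one transverse torus $T_0\subset(M,\D)$. Such a torus exists in every Engel manifold — one is produced, for instance, in the construction of the Engel--Lutz twist of del Pino--Vogel~\cite{PV20} — and, after a preliminary modification if necessary, I would arrange that $T_0$ lies in the setting where $\slf$ is defined (for example that $T_0$ is null-homologous, so that a bounding $3$-manifold $N$ with $\partial N=T_0$ is available).

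The next ingredient is a standard-neighbourhood theorem. Since $\W\subset\D\subset\E:=\D^2$, a transverse torus meets the even contact structure $\E$ transversally and has $\W|_T$ tangent to $T$; the point to prove is that near every point of $T$ the pair $(\D,T)$ is Engel-diffeomorphic to the standard model $(\R^4,\Dst)$ in which $\W=\R\partial_w$, the $\W$-lines fibre $\R^4$ over the standard contact $3$-space $(\R^3,\xist)$, and $T$ corresponds to $\gamma\times\R_w$ for an arc $\gamma$ transverse to $\xist$. I would then define the stabilization $T\mapsto T^{\mathrm{st}}$ by replacing such a slice-arc $\gamma\subset T$ with its transverse stabilization — equivalently, by a transverse connected sum with a standard $\slf=-3$ transverse unknot in a Darboux ball — and carrying the replacement across the chart in the $\W$-direction. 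Since $\gamma^{\mathrm{st}}$ is a transverse arc with the same endpoints, smoothly isotopic to $\gamma$ rel endpoints and with self-linking $2$ less, $T^{\mathrm{st}}$ is again an embedded torus; two things then need checking, both modelled on the contact case: that $T^{\mathrm{st}}$ is still transverse, and that — the modification being supported in a ball and only inserting kinks that can be pulled tight — $T^{\mathrm{st}}$ is smoothly isotopic to $T$.

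In parallel one sets up the invariant $\slf(T)$, valued in $\Z$ or in a homology group of $T$: using the canonical identification of the normal bundle $\nu(T)$ with $\E|_T/\W|_T$ together with a trivialisation of the relevant bundle over a bounding $3$-manifold $N$, push $T$ off to a parallel torus $T'$ and let $\slf(T)$ record the resulting intersection number, with independence of the choices and invariance under transverse isotopy checked exactly as for the classical self-linking number. The decisive computation is then local: via the model above it reduces to the behaviour of $\slf$ under transverse stabilization of knots in $(\R^3,\xist)$, so each stabilization changes $\slf(T)$ by $-2$. Consequently the $k$-fold stabilization $T_k$ of $T_0$ is transverse, is smoothly isotopic to $T_0$ (compose the one-step isotopies), and satisfies $\slf(T_k)=\slf(T_0)-2k$; in particular the $T_k$ with $k\ge0$ are pairwise non-isotopic as transverse tori, which proves the theorem. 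The hard part, I expect, is the interface between the global and the local: proving the standard-neighbourhood theorem sharply enough that the transverse stabilization is well-defined and remains within the class of transverse tori, and showing that this purely local move changes the \emph{global} homological invariant $\slf$ by a controlled amount — in effect, relating $\slf(T)$ to the self-linking numbers of the transverse slice-curves of $T$. The smooth isotopies, by contrast, are routine, and — just as for transverse knots — no destabilization is required: the forward family $\{T_k\}_{k\ge 0}$ is already infinite.
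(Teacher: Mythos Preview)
Your overall strategy---produce the family by stabilization and distinguish its members by a self-linking-type invariant---is the same as the paper's. However, the local picture you propose is geometrically wrong, and this breaks the construction as written. You assert that for a transverse torus $T$ the line field $\W|_T$ is tangent to $T$, and you model $T$ as $\gamma\times\R_w$ with $\W=\R\partial_w$. But $T$ transverse to the $2$-plane field $\D$ in a $4$-manifold means $TT\cap\D=0$; since $\W\subset\D$, the line field $\W$ is \emph{transverse} to $T$, not tangent. (Tangency of $\W$ is what happens for \emph{Legendrian} tori in even contact manifolds, treated in Section~\ref{section:EvenLegendrian}.) In your model $\partial_w\in TT\cap\D$, so the torus is not transverse; the ``stabilization'' you describe by sliding a transverse arc along the $\W$-direction therefore does not stay in the class of transverse tori.

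The paper avoids a neighbourhood theorem for arbitrary transverse tori altogether. It works in a Darboux chart $(\R^4,\Dst)$, takes a transverse \emph{curve} $C$ (transverse to $\E$), and uses the del Pino--Vogel construction: in the standard neighbourhood $(S^1\times D^3_R,\Dst)$ of $C$ the torus is $T_0=S^1\times K_0$ with $K_0$ a transverse knot in the contact $3$-ball $(D^3_R,\ker(dy-w\,dx))$. The stabilization is then simply $T_n=S^1\times K_n$ with $K_n$ the $n$-fold transverse stabilization of $K_0$. The invariant $\slf(T)$ is the homology class $[T']\in H_2(M\setminus T)$ of the $\W$-push-off $T'$; it lands in a rank-$2$ free subgroup generated by two meridional tori $\alpha,\beta$, and the computation (Lemma~\ref{lem:computationofsl}) gives $\slf(T_n)=\slf(K_n)\cdot\alpha$, so the \emph{divisibility} of this primitive element distinguishes the $T_n$. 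In particular no bounding $3$-manifold or integer-valued intersection count is needed, and the ``hard interface'' you anticipate---a general normal form for transverse tori---is bypassed by building the tori from the start in del~Pino--Vogel form.
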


This result was independently obtained by Gompf in~\cite{Go22}.

To prove Theorem~\ref{thm:mainEng} we will define in Section~\ref{section:transverseTori} a stabilization operation for transverse tori coming from the del Pino--Vogel construction~\cite{PV20}, that does not change the smooth isotopy class. In order to distinguish a transverse torus from its stabilization, we construct in Section~\ref{section:linkingClass} and~\ref{section:selfLinkingClass} a homological invariant, depending only on the formal data of a transverse torus, that we will use to distinguish stabilized transverse tori. This invariant can be thought of as an analogue of the self-linking number for transverse knots in contact $3$-manifolds. 

%So actually we prove that all transverse tori from the theorem are not formally isotopic. 

We have a full $h$-principle for transverse immersions of surfaces~\cite{PP19}. On the other hand, it remains open if transverse tori fulfill a full $h$-principle. Although, in recent work of Gompf~\cite{Go22} it is shown that transverse tori fulfill an existence $h$-principle: Every $2$-torus with trivial normal bundle in an Engel manifold is isotopic to a transverse torus.
For $1$-dimensional submanifolds of Engel manifolds the situation is solved. 
%An embedding of $S^1$ into an Engel manifold $(M,\D)$ is called tangent knot (also called horizontal or Engel knot). 
The subspace of tangent knots (also called horizontal or Engel knot) that are not everywhere tangent to $\W$ fulfill a full h-principle~\cite{CP18}, cf.~\cite{Ad07,Ge08a,FMP20}. Similarly, transverse curves (i.e.\ knots always transverse to $\D$) fullfill a full h-principle~\cite[Theorem~4.6.2]{EM02}.

Any Engel structure $\D$ induces naturally an even contact structure by $\E=[\D,\D]$ (see Section~\ref{section:back} for details). Motivated by Theorem~\ref{thm:mainEng} we want to study codimension-$2$ embeddings in even contact manifolds that fit to the even contact structure. Here the natural objects are \textit{Legendrian tori}, embedded $2$-tori that are everywhere tangent to the even contact structure $\E$. With similar ideas as in the proof of Theorem~\ref{thm:mainEng} we will prove an analogous statement for Legendrian tori in even contact structures in Section~\ref{section:EvenLegendrian}.

\begin{thm}\label{thm:mainEven}
	There exist even contact $4$-mani\-folds containing infinite families of pairwise non-isotopic Legendrian $2$-tori that are all smoothly isotopic.
\end{thm}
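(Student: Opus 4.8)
The plan is to run the argument of Theorem~\ref{thm:mainEng} in the Legendrian world. The first point is genuine existence: unlike transverse tori in Engel manifolds, a general even contact $4$-manifold need not contain any Legendrian torus, which is why the statement is an existence statement. A convenient supply of examples is the product construction. If $(Y,\xi)$ is a closed contact $3$-manifold with contact form $\alpha$, then on $M=Y\times S^1$ the pulled-back form $\alpha$ defines the hyperplane field $\E=\xi\oplus TS^1$; since $d\alpha|_{\E}$ has constant rank $2$ with kernel $TS^1$, this is an even contact structure, with characteristic line field $\W=TS^1$. For any Legendrian knot $L\subset(Y,\xi)$ the torus $T_L=L\times S^1$ is everywhere tangent to $\E$, hence Legendrian. (One may also arrange the example to carry an Engel structure $\D$ with $\E=[\D,\D]$, extracting it from the del Pino--Vogel model underlying the stabilization below.)

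Next I would transplant the stabilization operation. Working in a Darboux chart for $\E$, I perform on a Legendrian torus $T$ the local modification that produces $\Tstab$ from $T$ for transverse tori in Section~\ref{section:transverseTori} (the move coming from~\cite{PV20}), arranged so that the result $T_{\mathrm{stab}}$ is again everywhere tangent to $\E$. As in the transverse case the move is supported in a ball and is smoothly isotopically trivial, so $T_{\mathrm{stab}}$ is smoothly isotopic to $T$; iterating produces tori $T,\,T_{\mathrm{stab}},\,(T_{\mathrm{stab}})_{\mathrm{stab}},\dots$ all smoothly isotopic to $T$. In the product model this move is simply a fibrewise Legendrian stabilization $L\mapsto S_{\pm}(L)$.

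To distinguish these tori I would define the Legendrian analogue of the self-linking class of Section~\ref{section:selfLinkingClass}. For a Legendrian torus $T$ one has $TT\subset\E|_T$, so $\E|_T/TT$ is a line subbundle of the normal bundle $\nu(T)$, and together with its complement it equips $\nu(T)$ with a distinguished framing up to homotopy; when $\nu(T)$ is trivial (it is in the examples above, where $\nu(T)$ is pulled back from $\nu_Y(L)$), comparison with a reference framing from a suitable Seifert-type filling records a class in $H^1(T;\Z)\cong\Z^2$. As in the transverse case, this class depends only on the formal homotopy data of the Legendrian torus, hence is invariant under Legendrian isotopy, while the stabilization move changes it by a fixed nonzero element (in the product model it lowers the Thurston--Bennequin-type component by one, so the value at $T_L$ is $(\tb(L),0)$). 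Therefore the iterated stabilizations realise infinitely many distinct values of the invariant, so they are pairwise non-isotopic as Legendrian tori while all being smoothly isotopic to $T$.

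The main obstacle, I expect, is the set-up rather than any single hard computation: one must check that the del Pino--Vogel move can be carried out keeping the torus tangent to $\E$, that the homological invariant is well defined independently of the chosen framing and chart and is Legendrian-isotopy invariant, and --- crucially --- that there is a base even contact $4$-manifold carrying a Legendrian torus with trivial normal bundle on which this invariant is both defined and sensitive enough to detect all the stabilizations. This last point is exactly what the word ``exist'' in the statement is doing, and it is where one must ensure that nothing in the more flexible $4$-dimensional Legendrian isotopy problem can undo the effect of stabilization.
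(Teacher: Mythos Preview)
Your overall strategy matches the paper's: work in the product even contact manifold $(S^1\times N,\;\E=\partial_\theta\oplus\xi)$, take $L_n=S^1\times K_n$ with $K_n$ an $n$-fold Legendrian stabilization of a nullhomologous Legendrian $K_0\subset(N,\xi)$, observe that the $L_n$ are smoothly isotopic, and distinguish them by a Thurston--Bennequin-type invariant. Two points where the paper is more direct than your outline:

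First, the paper never attempts a stabilization move for Legendrian tori in a general Darboux chart. Since the theorem is only an existence statement, it stays entirely inside the product model and simply stabilizes the profile knot. Your attempt to ``transplant'' the del~Pino--Vogel move is not needed, and is in fact delicate: every Legendrian torus is tangent to the characteristic line field $\W$, so the local model $(\R^4,\Est)$ contains no Legendrian tori at all, and a move supported in a Darboux ball cannot be formulated the way it is for transverse tori.

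Second, the invariant the paper actually uses is the \emph{Thurston--Bennequin class} $\tb(L):=[L']\in H_2(M\setminus L)$, where $L'$ is the push-off of $L$ in the $TM/\E$ direction --- i.e.\ the same linking-class machinery as in Sections~\ref{section:linkingClass}--\ref{section:selfLinkingClass}, not a framing comparison landing in $H^1(T;\Z)$. In the product model, Lemma~\ref{lem:hom2} identifies the relevant summand of $H_2(M\setminus L_0)$ as $\Z\cdot[S^1\times\mu_{K_0}]$, and one computes $\tb(L_n)=(\tb(K_0)-n)\cdot[S^1\times\mu_{K_0}]$. This bypasses your need to specify a ``Seifert-type filling'' and to verify that the splitting $\nu(L)\cong(\E/TL)\oplus(TM/\E)$ actually defines a framing (which requires both line-bundle summands to be trivial, a point you leave implicit). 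Your framing formulation is morally equivalent in the product examples, but the linking-class version is what makes the computation and the invariance argument clean.
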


%In~\cite{CPV18} a notion of compatible open book decompositions of Engel and even contact structures is introduced where the binding consists of so called $(1,1)$-tori, which are transverse to $\E$ but tangent to $\W$. In general, from a geometric viewpoint embeddings that intersects a given distribution in constant rank should be interesting. From a topological viewpoint codimension-$2$ embeddings are the most interesting ones. For even contact structures there are exactly the $(1,1)$-tori and the Legendrian tori and for Engel structures there are transverse tori and $(1,1)$ tori which intersect the Engel structure in a line field. We expect similar results to hold for such tori.

First, we will give some relevant background on Engel and even contact structures in Section~\ref{section:back}. In Section~\ref{section:transverseTori} we will recall some facts about transverse tori from~\cite{PV20}. In particular, we will review a method of constructing transverse tori in Engel manifolds from transverse knots in contact $3$-manifolds and define a stabilization operation that will not change the smooth isotopy class of the transverse torus. To show that this stabilization operation changes the isotopy class as a transverse torus we devote Section~\ref{section:linkingClass} to \textit{the linking class}, a homology class, which can be seen as a generalization of the linking number, defined for general subsets in topological spaces.
From this we can define the self-linking class in Section~\ref{section:selfLinkingClass} and deduce a way of computing it for transverse tori coming from transverse knots via the del Pino--Vogel construction of transverse tori. Using this method we will distinguish transvere tori that are all smoothly isotopic.
The remaining section is devoted to Legendrian tori in even contact structures and to prove Theorem~\ref{thm:mainEven}.

\subsection*{Conventions:} We work in the smooth and oriented category, i.e. all manifolds, maps, and ancillary objects are assumed to be smooth. In addition, all manifolds and geometric structures on them like Engel and contact structures are assumed to be oriented and cooriented. Homology groups are always understood over the integers.

\subsection*{Acknowledgments:} I am happy to thank Sebastian Durst, \'Alvaro del Pino, Robert Gompf, and Dingyu Yang for useful comments, interesting discussions, and their interest in this work. Special thanks go to Robert Gompf for detailed feedback on an earlier version of the manuscript and for correcting some errors in my earlier arguments. I also thank the referees for useful suggestions and the careful reading of this article.

This project was finished while I was supported by ICERM (the Institute for Computational and Experimental Research in Mathematics in
Providence, RI) during the semester program on Braids (Feb 1 - May 6, 2022).

\section{Background}
\label{section:back}

\subsection{Topologically stable distributions}
An \textit{Engel structure} is a maximally non-integrable tangential $2$-plane field $\D$ on a $4$-manifold $M$, i.e.\ its iterated Lie-brackets $[\D,[\D,\D]]$ give the full tangent bundle $TM$. It is well-known that Engel structures admit a local normal form, which means Engel structures can only be studied globally. 
Moreover, the set of Engel structures on a $4$-manifold $M$ is open in the set of $2$-dimensional distributions on $M$. In general, a class of $k$-dimensional distributions  on
an $n$-manifold $M$ is called \textit{topologically stable} if it is open and admits a local normal form. The classification of such distributions in terms of geometric structures on manifolds can be traced back to Cartan~\cite{Mo93,Pr16}: Every topologically stable distribution is  
\begin{itemize}
	\item a line field ($k=1$ and $n$ arbitrary),
	\item a \textit{contact structure} ($k=n-1$, $n=2m+1$ odd; locally written as the kernel of a $1$-form $\alpha$ with $\alpha\wedge (d\alpha)^m$ non-vanishing),
	\item an \textit{even contact structure} ($k=n-1$, $n=2m+2$ even; locally written as the kernel of a $1$-form $\alpha$ with $\alpha\wedge (d\alpha)^m$ non-vanishing) or
	\item an Engel structure ($k=2$ and $n=4$).
\end{itemize} 
Although Engel structures play a distinguished role in the above classification since they are the only class that exists only in the special dimension $4$, they are the least understood class of topologically stable distributions.
In dimension $4$ all these geometries are closely related. An Engel structure $\D$ on a $4$-manifold $M$ induces an even contact structure $\E$ by $\E=[\D,\D]$ and the so-called \textit{characteristic line field} $\W\subset \D$ defined by $[\W,\E]\subset \E$. Finally, every $3$-dimensional hypersurface $N$ in $M$ transverse to $\W$ carries a canonical contact structure defined by $\xi=\E\cap TN$.

It is therefore not surprising that Engel structures, contact structures and even contact structures often behave in the same way structurally. In the following short summary we will outline a few points. First of all, by definition all these structures look locally like a standard model:
$$\big(\R^3,\xist=\ker(dz-y\,dx)=\langle\partial_y,\partial_x+y\partial_z\rangle\big)$$ 
in the contact case, 
$$\big(\R^4,\Dst=\ker(dz-y\,dx)\cap\ker(dy-w\,dx)=\langle\partial_w,\partial_x+y\partial_z+w\partial_y\rangle\big)$$ 
in the Engel case and 
$$\big(\R^4,\Est=[\Dst,\Dst]=\ker(dz-y\,dx)=\langle\partial_w,\partial_y,\partial_x+y\partial_z\rangle\big)$$ 
in the case of an even contact structure.

The existence question for these structures has been solved and can be simplified by the statement that such a structure exists whenever there is no obvious obstruction coming from the underlying algebraic topology. Some relevant references are~\cite{Gr86, EM02, Ma71, El89, Ge08, Mc87, Vo09, CPPP17,PV20, BEM15}. On the other hand, the uniqueness question up to homotopy through such structures is in general open. 

\subsection{Contact structures}
We first discuss the $3$-dimensional contact case.  We have to distinguish the cases where the underlying $3$-manifold is open or closed. 
From Gromov's work~\cite{Gr86}, cf.~\cite{EM02}, it follows that there is an $h$-principle for contact structures on open manifolds. Roughly speaking an $h$-\textit{principle} holds if the inclusion of a genuine structure into the formal data of the structure is a weak homotopy equivalence. In this situation it implies that two positive contact structures on an open manifold are homotopic through positive contact structures if and only if they are homotopic through tangential $2$-plane fields. Thus, the problem of classifying contact structures on open manifolds up to homotopy reduces completely to algebraic topology methods. 

However, for open manifolds there is in general a difference between homotopy through contact structures and isotopy. The first such example was found by Bennequin~\cite{Be82}. He showed that there exists a so-called overtwisted contact structure $\xiot$ on $\R^3$ that is homotopic through contact structures but non-isotopic to the standard structure $\xist$. This can be proven by studying \textit{Legendrian knots}, always tangent to the contact structures, or \textit{transverse knots}, always transverse to the contact structures, and their \textit{classical invariants}: the \textit{self-linking number} $\slf$ for transverse knots and the \textit{Thurston--Bennequin invariant} $\tb$ and the \textit{rotation number} $\rot$ for Legendrian knots. 
%From Bennequin's proof it follows in particular that we have an interesting knot theory (with more isotopy classes than the smooth knot theory) by considering Legendrian and transverse knots, see for example~\cite{Ge08}.

For closed manifolds the situation is different. Eliashberg~\cite{El89} distinguishes two classes of contact structures: the \textit{overtwisted} contact structures (containing a Legendrian unknot with vanishing Thurston--Bennequin invariant) and the \textit{tight} contact structures (containing no such unknot). Then he shows that the overtwisted contact structures fulfill an $h$-principle, implying that two positive overtwisted contact structures are homotopic through positive contact structures if and only if they are homotopic as tangential $2$-plane fields. On closed manifolds we have Gray's theorem, implying that two contact structures that are homtopic through contact structures are also isotopic~\cite{Ge08}.
 
On the other hand, the tight contact structures are closely linked to the underlying topology of the manifold and do not only depend on the algebraic topology of the underlying $2$-plane field. As in the case of $\R^3$, tight contact structures are closely related to their Legendrian and transverse knot theory. In fact, a contact manifold is tight if and only if every nullhomologous transverse knot $T$ fulfills the Bennequin bound $\slf(T)\leq2g(T)-1$ or equivalently if every nullhomologous Legendrian knot $L$ fulfills $\tb(L)+\rot(L)\leq2g(L)-1$, where $g$ denotes the genus of the underlying smooth knot type~\cite{El93}.
 
Recent development shows that the situation in higher dimensions is formally the same~\cite{BEM15}. %In terms of knot theory, it seems that the correct objects to study instead of Legendrian and transverse knots are codimension-$2$ contact submanifolds which are natural generalizations of transverse knots~\cite{CE20}.
\subsection{Even contact structures}
For even contact structures and Engel structures we can ask how much of the above discussion carries over. First, we discuss the case of even contact structures on $4$-manifolds. For open manifolds there is again an $h$-principle by the work of Gromov~\cite{Gr86}. Surprisingly, we also have a full $h$-principle for even contact structures on closed manifolds by work of McDuff~\cite{Mc87}, so we do not need to restrict to a subclass. However, Gray stability does not hold in general, not even for closed manifolds~\cite{Mo99}. The reason is that every even contact structure $\E$ comes together with a so-called \textit{characteristic line field} $\W\subset\E$ uniquely defined by $[\W,\E]\subset \E$ and a small perturbation of $\E$ can change the dynamics of $\W$ completely. (A version of Gray's theorem holds if we can assume the characteristic line field to stay fixed during the homotopy between the even contact structures~\cite{Go97}.)

In conclusion, we can say that even contact structures on open or closed manifolds will behave structurally similar to contact structures on open manifolds, i.e.\ the study of even contact structures up to homotopy reduces completely to algebraic topology but even contact structures up to isotopy are more interesting to study.

\subsection{Engel structures}
The situation for Engel structures is less well understood. First, we observe that any Engel structure $D$ on a $4$-manifold $M$ induces a flag of its tangent bundle
$$\W\subset\D\subset\E\subset TM,$$
where $\E$ is the associated even contact structure and $\W$ its characteristic line field. Thus, in the oriented case, a $4$-manifold admitting an Engel structure has to be parallelizable. On the other hand, this is the only obstruction to the existence of an Engel structure: Every parallelizable $4$-manifold admits an Engel structure. This follows for open manifolds again by Gromov's $h$-principle and was proven for closed manifolds by Vogel~\cite{Vo09}, cf.~\cite{CPPP17}.

If a full $h$-principle holds is unknown, but recently the subclass of \textit{overtwisted} Engel structures fulfilling a full $h$-principle was introduced~\cite{PV20}. One of the main questions in the field of Engel structures asks if there are Engel structures that are not overtwisted. Moreover, it is not clear if overtwistedness is a property that is preserved under homotopy of Engel structures. And thus we can more generally ask if there exist Engel structures that are not homotopic to overtwisted Engel structures. For Engel structures there also exists the class of \textit{loose} Engel structures that satisfy an h-principle, as well~\cite{CPP20}. Unlike the overtwisted Engel structures, that are characterized by a local property, these loose Engel structures are given by a global property. It is not clear how loose Engel structures are related to overtwisted ones.

Since Gray's theorem for Engel structures does not hold~\cite{Mo99}, there can be also Engel structures that are homotopic but not isotopic. The first such examples were found on $\R^4$ by Gershkovich~\cite{Ge95}. One possibility to construct such an example is by using an overtwisted contact structure $\xiot$ on $\R^3$ as follows. Let $X_1$ and $X_2$ be two linearly independent vector fields in $\xiot$. Then it is easy to see that $Y_1:=\partial_t$ and $Y_2:=\cos(t)X_1+\sin(t)X_2$ generate an Engel structure on $\R^4$, where $t$ is a coordinate on the extra $\R$ factor. In~\cite{Ge95} Gershkovich used contact geometry to distinguish this Engel structure from the standard Engel structure on $\R^4$. Such Engel structures could potentially also be distinguished by the classical invariants of transverse tori.

%%%%%%%%%%%%%%%%%%%%%%%%%%%%%% SECTION %%%%%%%%%%%%%%%%%%%%%%%%%%%%%%%%%%%

\section{Transverse tori}
\label{section:transverseTori} 
In~\cite{PV20} del Pino and Vogel used embedded transverse tori in Engel manifolds as generalizations of transverse knots in contact manifolds. In this section we recall the relevant definitions and constructions from~\cite{PV20}.

\begin{defi}
	An embedded $2$-torus $T$ in an Engel manifold $(M,\D)$ is called a \textbf{transverse torus}, if it is everywhere transverse to $\D$.
\end{defi}

In principle, this definition would make sense for any embedded surface, but in the oriented case any such surface would inherit a parallelization and a trivialization of its normal bundle (both coming from the parallelization of the Engel structure). Thus, if we restrict to oriented closed surfaces, every transverse surface is a torus. We remark that the same reasoning applies for non-orientable closed surfaces and we observe that any non-orientable closed transverse surface is a Klein bottle.

\begin{ex}\label{ex:homologous}
We consider the $4$-torus $T^4$ with angular coordinates $(\theta_1,\theta_2,\theta_3,\theta_4)$ and Engel structure given by the span of $\partial_{\theta_1}$ and $\cos\theta_1\partial_{\theta_2}+\sin\theta_1\partial_{\theta_3}+\partial_{\theta_4}$. Then the $2$-torus given by $\{\theta_1=\theta_2=0\}$ is a transverse torus.
\end{ex}

In~\cite{PV20} the following construction of transverse tori in standard neighborhoods of transverse curves is described. 
A \textbf{transverse curve} $C$ is an embedded $S^1$ in an Engel manifold $(M,\D)$ that is everywhere transverse to the induced even contact structure $\E=[\D,\D]$. By standard methods one can prove the following two lemmas, see~\cite[Proposition~2.7 and Lemma~2.8]{PV20} for details.

\begin{lem}\label{lem:h}
Let $C\colon S^1\rightarrow (M,\D)$ be an embedding. Then $C$ is isotopic to a transverse curve. Moreover, if $C_1$ and $C_2$ are two transverse curves in $(M,\D)$ that are smoothly isotopic, then they are isotopic as transverse curves. 
\end{lem}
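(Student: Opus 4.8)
My plan is to prove both statements by hand, via the spiral construction familiar from contact topology. (They are, respectively, the non-parametric and the parametric $\pi_0$-part of the $h$-principle for embedded circles whose velocity field is a section of the open subset $\mathcal{A}\subset TM$ of nonzero vectors not lying in $\E$; I will not package the argument that way.) First I fix a $1$-form $\alpha$ with $\ker\alpha=\E$ inducing the coorientation of $\E$ --- it exists globally since $\E$ is coorientable --- so that $C$ is transverse exactly when $\alpha(\dot C)$ is nowhere zero, in which case $\alpha(\dot C)$ has constant sign because $S^1$ is connected. Note that the fibres of $\mathcal{A}$ are the two open half-spaces cut out of $T_pM$ by $\E_p$, so $\mathcal{A}$ retracts fibrewise onto any vector field transverse to $\E$; thus the formal directing data attached to a transverse curve is essentially unique, a fact I will need at the end.

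\emph{Existence.} I would cover $C(S^1)$ by finitely many Darboux charts in which $(M,\E)\cong(\R^4,\ker(dz-y\,dx))$, the coordinate $w$ being a spectator, and subdivide $C$ into finitely many arcs, each lying in one chart, whose endpoints --- after a $C^\infty$-small perturbation, harmless because the condition is generic --- are points at which $C$ is positively transverse to $\E$. Over each arc I replace $C$ by a $C^0$-close embedded arc that has the same $1$-jet at the two endpoints and is positively transverse to $\E$ throughout; in the chart such an arc is produced by the standard spiral (equivalently, braiding) construction, exactly the Martinet-type argument that turns an arbitrary knot into a transverse one in a contact $3$-manifold, the extra coordinate playing no role. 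Each replacement is $C^0$-small and hence realized by an ambient isotopy, so carrying them all out produces a transverse curve isotopic to $C$.

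\emph{Uniqueness.} Given a smooth isotopy $\{C_s\}_{s\in[0,1]}$ with $C_0$ and $C_1$ transverse, I would run the parametric version of the construction above: after a perturbation that is fixed for $s$ near $0$ and $1$, the bad locus $B=\{(s,t): \dot C_s(t)\in\E\}$ is a compact subset of $(0,1)\times S^1$ meeting only finitely many Darboux charts, and one spirals it away in $s$-families, arranging the modification to agree with the given $C_s$ at the two ends of the $s$-interval on which it is supported. The delicate point --- and what I expect to be the main obstacle --- is to keep the \emph{entire family} of curves embedded throughout this modification, and this is precisely where the hypothesis $\dim M=4$, that is, that the curve has codimension $3$, enters. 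In codimension $2$ the analogous statement is false: transverse knots in contact $3$-manifolds that are smoothly isotopic need not be transversely isotopic, the obstruction being the self-linking number; in codimension $3$, by contrast, there is enough room to undo any inserted spiral, so no such invariant survives. Equivalently, the content of this step is that the $h$-principle for $\mathcal{A}$-directed embeddings holds for embeddings, and parametrically, not merely for immersions. Granting this, $C_0$ and $C_1$ lie in the same path-component of smooth embeddings and carry essentially unique formal directing data, hence in the same path-component of transverse curves; that is, they are isotopic as transverse curves.
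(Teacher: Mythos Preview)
The paper does not actually prove this lemma; it only says ``by standard methods'' and refers to \cite[Proposition~2.7 and Lemma~2.8]{PV20}. Your proposal is therefore more detailed than anything in the paper, and the strategy you outline---Darboux charts plus the Martinet-type spiral construction, first non-parametrically and then in a one-parameter family---is the standard route and is correct.

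The existence half is complete as written. For uniqueness your identification of the crux is right: the codimension-$3$ hypothesis is exactly what lets inserted spirals be cancelled while keeping every time-slice embedded, and this is what fails for transverse knots in contact $3$-manifolds. Your phrasing ``Granting this'' makes the argument look more tentative than it needs to be. To close it cleanly you can either (a) note explicitly that in the local model $\ker(dz-y\,dx)\subset T\R^4$ a spiral inserted in the $(x,y,z)$-slice can be slid off in the $w$-direction and contracted, or (b) observe that the relation $\dot C\notin\E$ is open and ample, so convex integration gives the parametric $h$-principle for transverse \emph{immersions}, and then that a generic $1$-parameter family of immersed circles in a $4$-manifold consists of embeddings (the self-intersection locus has expected dimension $3-4<0$), which upgrades the conclusion to embeddings. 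Either addendum turns your sketch into a full proof.
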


\begin{lem}\label{lem:standard}
Any transverse curve $C$ in $(M,\D)$ admits a neighborhood $(\nu C,\D)$ (where here $\D$ denotes the restriction of the Engel structure of $M$ to $\nu C$) which is, for some $R>0$, Engel diffeomorphic to the following standard neighborhood
$$\big(S^1\times D_R^3,\Dst=\ker(d\theta-y\,dx)\cap\ker(dy-w\,dx)\big)$$
where $\theta$ is an angular coordinate of $S^1$, $(x,y,w)$ are Cartesian coordinates on the disk $D_R^3$ with radius $R$ in $\R^3$ and the Engel-diffeomorphism sends $C$ to $S^1\times\{0\}$.
\end{lem}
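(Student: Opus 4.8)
The plan is to normalize the whole Engel flag $\W\subset\D\subset\E\subset TM$ on a neighborhood of $C$ in two stages, using Moser-type deformation arguments on top of the local normal form for Engel structures (which holds near every point by hypothesis). The point to bear in mind is that Engel structures are the one class of topologically stable distributions for which \emph{global} stability fails, so Gray-type theorems may not be quoted as black boxes; but everything needed here is \emph{relative} --- it fixes $C$, and at one stage also the characteristic foliation --- and in that relative regime the usual machinery does apply.

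\emph{Step 1: the flag along $C$.} At each $p\in C$ the Engel flag is a complete flag $\W_p\subset\D_p\subset\E_p\subset T_pM$, and along $S^1\times\{0\}$ the model $\Dst$ determines the complete flag $\langle\partial_w\rangle\subset\langle\partial_w,\partial_x\rangle\subset\langle\partial_w,\partial_x,\partial_y\rangle\subset T(S^1\times D_R^3)|_{S^1\times\{0\}}$. Since $\mathrm{GL}_4(\R)$ acts transitively on complete flags there is pointwise an isomorphism matching the two; and because each line bundle in the associated graded of the flag (namely $\W$, $\D/\W$, $\E/\D$, $TM/\E$) is oriented --- by the orientation of $M$ and the coorientations carried by the flag, our standing conventions --- the structure group of the bundle of complete flags over $C\cong S^1$ reduces to a connected group, so this bundle is trivial and the isomorphism can be chosen over all of $S^1$. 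Realizing it as the derivative along $C$ of a diffeomorphism of tubular neighborhoods (and shrinking $R$), we may assume from now on that $\D$ is an Engel structure on $S^1\times D_R^3$ with $C=S^1\times\{0\}$ and with $\W$, $\D$, $\E$ equal to the three model subbundles above \emph{along} $S^1\times\{0\}$.

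\emph{Step 2: $\E$, then $\D$ inside $\E$.} First make the even contact structure standard: writing $\E=\ker\alpha$ and rescaling so $\alpha$ agrees with $\alpha_{\mathrm{st}}:=d\theta-y\,dx$ along $S^1\times\{0\}$, run the Moser argument for even contact structures relative to $C$ (the form of Gray stability that is valid once the characteristic foliation is kept under control, cf.\ the fixed-characteristic-foliation statement recalled in Section~\ref{section:back}) --- or, equivalently, patch the local normal form for even contact structures along the finitely many arcs of a cover of $C$. Either way one gets a diffeomorphism fixing $C$ and tangent to the identity along it, after which $\E=\ker(d\theta-y\,dx)$ on $S^1\times D_R^3$ with characteristic line field $\W=\langle\partial_w\rangle$, and $\D\subset\E$ is still an Engel structure with the prescribed $1$-jet along $S^1\times\{0\}$. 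Now $\D$ is determined by the line subbundle $\D/\W\subset\E/\W$, i.e.\ by a section $\sigma$ of the projectivization $\mathbb{P}(\E/\W)\to S^1\times D_R^3$; the requirement that $\D$ be Engel with associated even contact structure $\E$ says exactly that $\sigma$ rotates monotonically along each leaf of the characteristic foliation (the $\partial_w$-lines), and for $\Dst$ this rotation is the model one $w\mapsto\langle(\partial_x+y\partial_\theta)+w\,\partial_y\rangle$. A fibrewise straightening --- again a Moser-type isotopy, supported near $S^1\times\{0\}$ and fixing both $\E$ and $C$ --- carries $\sigma$ to the model section, hence $\D$ to $\Dst$. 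Composing the diffeomorphisms of the two steps yields the asserted Engel diffeomorphism $(\nu C,\D)\to(S^1\times D_R^3,\Dst)$.

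\emph{Where the difficulty lies.} The delicate part is the pair of Moser/straightening arguments in Step 2. Since there is no global stability theorem for Engel structures to lean on, one has to verify directly that the deformations stay within even contact (respectively Engel) structures, that the generating vector fields vanish to the right order along $C$, and --- because $C$ is compact but the positivity estimates are pointwise --- that a single radius $R$ works all the way around the circle; the bookkeeping for the monotone-rotation condition in the final step, where the ODE along the $\W$-leaves must be solved compatibly with the parameter $\theta\in S^1$, is the computational heart. These are precisely the checks carried out in~\cite[Proposition~2.7 and Lemma~2.8]{PV20}, which is what makes it fair to call the argument standard here.
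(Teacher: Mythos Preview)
The paper does not give its own proof of this lemma; it simply cites \cite[Proposition~2.7 and Lemma~2.8]{PV20} and calls the argument ``standard,'' so there is nothing to compare against directly. Your two-step outline --- trivialize the oriented complete flag over $S^1$, then normalize $\E$ and subsequently $\D\subset\E$ by a relative Moser/patching argument, the last step exploiting the monotone rotation of $\D/\W$ along the $\W$-leaves --- is a correct sketch of such a standard argument, and you rightly flag that the delicate points (the degeneracy in the even-contact Moser step, uniformity of $R$ over the compact $S^1$, compatibility of the leafwise ODE with the $\theta$-parameter) are exactly what that reference verifies.
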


In particular there exist many transverse curves and in any standard neighborhood we can construct transverse tori with the following lemma, see~\cite[Lemma~3.3]{PV20} for details.

\begin{lem}\label{lem:delPinoVogel}
Let $K$ be a transverse knot in $(D_R^3,\xi=\ker dy-w\,dx)$. Then 
$$S^1\times K\subset(S^1\times D_R^3,\Dst)$$
is a transverse torus.
\end{lem}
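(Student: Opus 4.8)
The plan is to check the transversality condition pointwise by a direct computation in the coordinates furnished by Lemma~\ref{lem:standard}. For an embedded $2$-torus $T$ in the $4$-manifold $S^1\times D_R^3$, being transverse to the $2$-plane field $\Dst$ means precisely that $T_pT\cap(\Dst)_p=\{0\}$ for every $p\in T$, since the two dimensions add up to $4$. As $K$ is an embedded knot and $S^1$ an embedded circle, $S^1\times K$ is automatically an embedded $2$-torus, so only this transversality has to be verified.

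Write $\Dst=\langle\partial_w,\,X\rangle$ with $X:=\partial_x+y\,\partial_\theta+w\,\partial_y$, and parametrise $K$ by a parameter $s$, so that its velocity $\dot K=\dot x\,\partial_x+\dot y\,\partial_y+\dot w\,\partial_w$ has vanishing $\partial_\theta$-component. Then at a point of $S^1\times K$ the tangent plane of the torus is spanned by $\partial_\theta$ and $\dot K$. Suppose $\alpha\,\partial_\theta+\beta\,\dot K=\gamma\,\partial_w+\delta\,X$ for real numbers $\alpha,\beta,\gamma,\delta$. Comparing the coefficients of $\partial_x$, $\partial_y$, $\partial_w$, $\partial_\theta$ gives $\delta=\beta\dot x$, $\delta w=\beta\dot y$, $\gamma=\beta\dot w$, $\alpha=\delta y$, and eliminating $\delta$ from the first two equations yields $\beta(\dot y-w\,\dot x)=0$.

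The key point is that $\dot y-w\,\dot x=(dy-w\,dx)(\dot K)$ is nowhere zero, because this is exactly the condition that $K$ be transverse to the contact structure $\xi=\ker(dy-w\,dx)$ on $D_R^3$. Hence $\beta=0$, and then $\delta=\gamma=\alpha=0$, so $T_p(S^1\times K)\cap(\Dst)_p=\{0\}$ at every point, i.e.\ $S^1\times K$ is a transverse torus.

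Since the whole argument reduces to this short linear-algebra computation, there is no real obstacle; the only thing needing a moment's attention is the $\partial_\theta$-term hidden inside the generator $X$ of $\Dst$ (which is why $\partial_\theta\in T_p(S^1\times K)$ does not immediately destroy transversality), together with the recognition of $\dot y-w\,\dot x$ as the contact-transversality condition for $K$.
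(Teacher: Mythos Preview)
Your argument is correct: the pointwise linear-algebra check that $T_p(S^1\times K)\cap(\Dst)_p=\{0\}$, reducing to $\beta(\dot y-w\dot x)=0$ and hence to the contact-transversality of $K$, is exactly the expected verification. The paper itself does not spell out a proof but refers to \cite[Lemma~3.3]{PV20}; your direct computation is precisely the argument one finds there.
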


%The same works more generally for any loop $(K_\theta)_{\theta\in S^1}$ of transverse knots~\cite[Lemma~3.4]{PV20}. For sake of simplicity we will not discuss this more general situation in the present work. 
With Lemmas~\ref{lem:h},~\ref{lem:standard} and~\ref{lem:delPinoVogel} we get transverse tori in any Engel manifold. We call a transverse torus arising like this a \textbf{del Pino--Vogel torus}. On the other hand, not every transverse torus is a del Pino--Vogel torus.  Indeed, if a transverse torus arises as above, it is nullhomologous, since it bounds the embedded $3$-manifold $S^1\times F$ where $F$ is a Seifert surface of $K$ in $D_R^3$. In particular, we see that the transverse torus from Example~\ref{ex:homologous} is not a del Pino--Vogel torus.
Moreover, it is not clear if the \textbf{core} $C$ and the \textbf{profile} $K$ in the above construction are unique.

%We expect the theory of embedded transverse tori in an Engel manifold $(M,\D)$ to recover many properties of the Engel manifold itself since already the theory of smoothly embedded surfaces in $4$-manifolds is very rich and carries information about the $4$-manifold, see for example~\cite{CKS04, Ka17}. 

There are different types of natural equivalence relations on transverse tori. We briefly discuss them to fix our notation.
Two transverse tori $T_1$ and $T_2$ in an Engel manifold are called \textbf{(ambient) isotopic} if there exists an Engel diffeomorphism $f\colon(M,\D)\rightarrow(M,\D)$ that maps $T_1$ to $T_2$ and is isotopic (through Engel diffeomorphisms) to the identity.
Two transverse tori $T_1$ and $T_2$ in an Engel manifold are called \textbf{equivalent} if there exists an Engel diffeomorphism $f\colon(M,\D)\rightarrow(M,\D)$ that maps $T_1$ to $T_2$.
Obviously, ambient isotopy implies equivalence, the reverse implication is in general wrong (since not every Engel diffeomorphism will be isotopic to the identity).
Two transverse tori $T_i\colon T^2\rightarrow (M,\D)$ are called \textbf{isotopic} if there exists a homotopy between them through transverse tori.
Ambient isotopy implies isotopy, but other than in the smooth category the isotopy extension theorem does not hold, i.e. there exist transverse tori that are isotopic but not ambient isotopic. Note that $\E$ intersects a transverse torus in a line field, the so called \textbf{characteristic foliation}, which will be preserved under ambient isotopy but not under isotopy. 
Two transverse tori $T_i\colon T^2\rightarrow (M,\D)$ are called \textbf{formal isotopic} if there exists a homotopy between them through formal transverse tori. Every isotopy can be also seen as a formal isotopy. In Section~\ref{sec:formal} we will properly define formal transverse tori and demonstrate that our invariants will distinguish transverse tori only up to formal isotopy. It remains open if transverse tori fulfill a full $h$-principle, i.e.\ if any two formal isotopic transverse tori are isotopic.

In Theorem~\ref{thm:mainEng} we show that in any Engel manifold there are infinitely many non-formally isotopic transverse tori that are all smoothly isotopic. The construction of these examples is via the obvious stabilization operation. Let $T$ be a del Pino--Vogel transverse torus with core $C$ and profile $K$. We define the \textbf{stabilization} $\Tstab$ of $T$ to be $C\times \Kstab$ where $\Kstab$ is the stabilization of $K$.
Since it is not clear if the profile is unique, the stabilization could depend on $K$ and $C$ and it is not clear if the transverse knot $K$ is an invariant of $T$. However, we would expect that $\Tstab$ is not isotopic to $T$. In the next sections we will develop an invariant, similar to the self-linking number of transverse knots, that will enable us to distinguish stabilizations of transverse tori.

%%%%%%%%%%%%%%%%%%%%%%%%%%%%%% SECTION %%%%%%%%%%%%%%%%%%%%%%%%%%%%%%%%%%%

\section{The linking class for embedded tori}
\label{section:linkingClass} 

In this section, we will introduce a generalization of the linking number to higher dimensional embedded submanifolds. We believe that this invariant, which we call linking class, is known to the experts, but we could not find a discussion in the literature. Other definitions of linking numbers in higher dimensions are mostly given for embeddings of spheres and differ from our discussion here, see for example~\cite{CKS04}.

Let $L$ and $T$ be submanifolds of dimension $l$ and $t$ of a topological $m$-manifold $M$. Then we define the $l$-th \textbf{linking class} $\lk(L,T)$ to be the homology class
$$\lk(L,T):=[L]\in H_l(M\setminus T).$$
We remark that for nullhomologous knots $L$ and $T$ in a $3$-manifold $M$ we can identify $H_1(M\setminus T)$ canonically with $H_1(M)\oplus \Z$, where the first linking class $\lk(L,T)$ lies in the $\Z$-summand and can thus be identified with an integer, the linking number.

%The following lemma is obvious to prove and explains the name.
%
%\begin{lem}
	%If two tori $T_1$ and $T_2$ in $\R^4$ can be separated by an embedded $3$-sphere, then the linking class $\lk(T_1,T_2)$ vanishes.
%\end{lem}
%
%In contrast to the linking number of $1$-dimensional knots in $3$-manifolds, it is not clear (and probably not true) that the general linking class is symmetric. %Although, in the special cases where we will use the linking class it is not hard to see that it is actually symmetric.

To make a useful invariant out of the linking class, we first need to compute the homology of $M\setminus T$. With a view to our application and for concreteness we consider two special cases in the following lemmas.

\begin{lem} \label{lem:hom}
Let $T$ and $T'$ be embedded $2$-tori with trivial normal bundle in a $4$-manifold $M$. If $T$ and $T'$ are nullhomologous in $H_2(M)$ and if $H_3(M)=0$, then 
%$H_2(M\setminus T)$ is isomorphic to $H_2(M)\oplus\Z^2$ and 
the linking class $\lk(T',T)$ takes values in a free abelian group of rank $2$, generated by the meridians $\alpha$ and $\beta$ of $T$.
\end{lem}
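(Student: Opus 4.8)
The plan is to read off the relevant part of $H_2(M\setminus T)$ from the long exact sequence of the pair $(M,M\setminus T)$, using excision and the Thom isomorphism. Since $T$ has trivial normal bundle, fix a tubular neighbourhood $\nu T\cong T\times D^2$, pick a basis $\gamma_1,\gamma_2$ of $H_1(T)\cong\Z^2$, and let $\alpha:=\gamma_1\times S^1$ and $\beta:=\gamma_2\times S^1$ be the corresponding meridional tori, which we regard as lying in $\partial(\nu T)\subset M\setminus T$ (these are the meridians of $T$). By excision, $H_k(M,M\setminus T)\cong H_k(\nu T,\nu T\setminus T)\cong H_k(T\times D^2,T\times S^1)$, and the Thom isomorphism for the oriented rank-$2$ bundle $\nu T$ identifies this with $H_{k-2}(T)$. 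In particular $H_3(M,M\setminus T)\cong H_1(T)\cong\Z^2$, with basis represented by the classes $\gamma_i\times(D^2,S^1)$.

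First I would isolate the segment
\[
H_3(M)\longrightarrow H_3(M,M\setminus T)\xrightarrow{\ \partial\ }H_2(M\setminus T)\longrightarrow H_2(M)
\]
of the long exact sequence of the pair. The hypothesis $H_3(M)=0$ forces $\partial$ to be injective, so $G:=\operatorname{im}\partial\subset H_2(M\setminus T)$ is free abelian of rank $2$; and by exactness $G=\ker\big(H_2(M\setminus T)\to H_2(M)\big)$. Chasing a basis element $\gamma_i\times(D^2,S^1)$ through the excision isomorphism and the connecting homomorphism $H_3(T\times D^2,T\times S^1)\to H_2(T\times S^1)$ shows that $\partial$ sends it to $[\gamma_i\times S^1]$; hence $G$ is exactly the subgroup generated by the meridians $\alpha$ and $\beta$, which is therefore free abelian of rank $2$.

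It remains to place $\lk(T',T)$ inside $G$. By definition $\lk(T',T)=[T']\in H_2(M\setminus T)$, and its image under $H_2(M\setminus T)\to H_2(M)$ is the class $[T']\in H_2(M)$, which vanishes because $T'$ is nullhomologous. Therefore $\lk(T',T)\in\ker\big(H_2(M\setminus T)\to H_2(M)\big)=G=\langle\alpha,\beta\rangle\cong\Z^2$, which is the assertion.

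The one step needing genuine care is the identification, under excision and the Thom isomorphism, of the connecting homomorphism on the generators $\gamma_i\times(D^2,S^1)$ with the geometric meridional tori $\alpha,\beta$; this is where the triviality (in particular orientability) of the normal bundle of $T$ enters, and it is the assumption $H_3(M)=0$ that pins the rank down to exactly $2$ rather than something smaller. Everything else is a routine diagram chase, and one could equally run the argument through the Mayer--Vietoris sequence for $M=(M\setminus\operatorname{int}\nu T)\cup\nu T$, noting that the bundle projection $\partial(\nu T)\cong T\times S^1\to T$ annihilates precisely the rank-$2$ summand of $H_2(T\times S^1)$ spanned by $\alpha$ and $\beta$.
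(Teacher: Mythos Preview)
Your proof is correct. Both your argument and the paper's reach the same conclusion---that $\ker\big(H_2(M\setminus T)\to H_2(M)\big)$ is free of rank~$2$ and generated by the meridional tori---but they get there along different exact sequences. The paper runs the Mayer--Vietoris sequence for the decomposition $M=\nu T\cup(M\setminus\mathring{\nu T})$: with $H_3(M)=0$ the map $H_2(\partial\nu T)\to H_2(\nu T)\oplus H_2(M\setminus\mathring{\nu T})$ is injective, and tracking the three generators of $H_2(T^2\times S^1)\cong\Z^3$ (the push-off torus and the two meridional tori) shows that $\alpha,\beta$ inject and that any class in $H_2(M\setminus\mathring{\nu T})$ dying in $H_2(M)$ must be a combination of them. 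You instead use the long exact sequence of the pair $(M,M\setminus T)$ together with excision and the Thom isomorphism, which packages the same information more economically: $H_3(M,M\setminus T)\cong H_1(T)\cong\Z^2$ already has the right rank, so you never have to deal with the extra ``push-off'' generator that the paper carries along and then discards. Your final remark that one could equally use Mayer--Vietoris is exactly what the paper does; conversely, the paper's argument is essentially your excision step unwound by hand.
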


\begin{proof}
We choose an identification of $\nu T$ with $S^1 \times S^1\times D^2$ such that $S^1\times S^1\times \text{pt}$ is nullhomologous in $H_2(M\setminus\mathring{\nu T})$. Then we consider the following part of the Mayer-Vietoris sequence and the images of the generators under the homomorphisms.
\[\begin{array}{ccccccc}
0&{{\longrightarrow}}& \overset{\cong \Z^3}{\overbrace{H_2(\partial\nu T)}} &\overset{f}{\longrightarrow} &\overset{\cong \Z}{\overbrace{H_2(\nu T)}}\oplus H_2(M\setminus\mathring{\nu T})&\longrightarrow& H_2(M).\\
&&S^1\times S^1\times \text{pt}&\longmapsto& (S^1\times S^1\times0,0)&\longmapsto&0\\
&&\alpha:=S^1\times\text{pt}\times \partial D^2&\longmapsto& (0,\alpha)&\longmapsto&0\\
&&\beta:=\text{pt}\times S^1\times \partial D^2&\longmapsto&(0,\beta)&\longmapsto&0
\end{array}\]

The map $f$ is injective and we write for simplicity again $\alpha$ and $\beta$ for their images in $H_2(M\setminus\mathring{\nu T})$ (see Figure~\ref{fig:transversetori}). Since $T=S^1\times S^1\times 0$ is nullhomologous it maps to $0$ in $H_2(M)$. The generators $\alpha$ and $\beta$ are boundaries in $\nu T$ and thus also vanish in $H_2(M)$. Therefore, $f$ acts on the generators as stated. Since$f$ is injective, it follows that $\alpha$ and $\beta$ are primitive elements in $H_2(M\setminus\mathring{\nu T})$ and thus generate a free abelian group of rank $2$.
Since $T'$ is nullhomologous in $H_2(M)$ it follows that $T'$ seen as a homology class in $H_2(M\setminus\mathring{\nu T})$ is a linear combination of $\alpha$ and $\beta$, which implies the statement.
%that $\lk(T',T)$ takes values in the free abelian group of rank $2$, generated by $\alpha$ and $\beta$.
\end{proof}

%In general there is no canonical choice of $\alpha$ and $\beta$ and thus we can not see the linking class as a purely numerical invariant. However, in the context of Engel structures and even contact structures we will be able to extract a numerical invariant out of it as we will see in the next sections.

%($2$-tori with trivial normal bundles in $\R^4$, $S^1\times \R^3$ or $S^1 \times S^3$) we make the assumptions that $K$ is a $k$-dimensional submanifold with trivial normal bundle $\nu K$ in the $m$-manifold $M$ and that $H_i(K)$ is free. Then we compute with the K\"unneth formula 
%$$H_i(\partial \nu K)\cong H_i(K\times S^{m-k-1})\cong H_i(K)\oplus H_{i-m+k+1}(K).$$
%If we assume further that $H_i(M)=H_{i+1}(M)=0$ then excision and the Mayer--Vietoris sequence yield
%$$H_i(M\setminus K)\cong H_i(M\setminus\mathring{\nu K})\cong H_{i-m+k+1}(K).$$

%(One could also choose slightly weaker requirements and still get similar results, but as we will not need this we will not do so here.)

%Now we restrict further to the case of embedded $2$-tori $T_1$ and $T_2$ with trivial normal bundles in $M$, where $M$ is a $4$-manifold with $H_2(M)=H_3(M)=0$. Then the (second) linking class is
%$$\lk(T_1,T_2)=[T_1]\in H_2(M\setminus \mathring{\nu T_2})\cong \Z^2,$$
%where we suppress the index $2$ in the linking class. 

\begin{lem} \label{lem:hom2}
Let $K$ and $K'$ be nullhomologous knots in a closed $3$-manifold $N$. We consider $L=S^1\times K$ and $L'=S^1\times K'$ in $M=S^1\times N$. Then $H_2(M\setminus L)$ is isomorphic to $H_1(N)\oplus H_2(N)\oplus \Z$ and the linking class $\lk(L',L)$ takes values in the last free abelian group generated by $S^1\times \mu_K$ where $\mu_K$ is the meridian of $K$.
\end{lem}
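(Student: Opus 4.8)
The plan is to compute $H_*(M\setminus L)$ via a Künneth-type decomposition combined with a Mayer--Vietoris argument, exactly as in Lemma~\ref{lem:hom}. First I would note that $M\setminus L = S^1\times N\setminus(S^1\times K) = S^1\times(N\setminus K)$, so by the Künneth theorem (everything is free over $\Z$, so there is no Tor term)
$$H_2(M\setminus L)\cong H_2(S^1\times(N\setminus K))\cong H_0(S^1)\otimes H_2(N\setminus K)\;\oplus\;H_1(S^1)\otimes H_1(N\setminus K).$$
Thus the whole computation reduces to understanding $H_1(N\setminus K)$ and $H_2(N\setminus K)$ for a nullhomologous knot $K$ in the closed $3$-manifold $N$.

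Next I would compute those two groups by Mayer--Vietoris applied to $N=\nu K\cup (N\setminus\mathring{\nu K})$, running the relevant portion of the long exact sequence just as in the proof of Lemma~\ref{lem:hom}. Since $K$ is nullhomologous, its meridian $\mu_K$ generates a $\Z$-summand in $H_1(N\setminus K)$ that is complementary to the image of $H_1(N)$; more precisely the sequence
$$H_2(N)\longrightarrow H_1(\partial\nu K)\xrightarrow{\;g\;}H_1(\nu K)\oplus H_1(N\setminus\mathring{\nu K})\longrightarrow H_1(N)\longrightarrow 0,$$
together with the fact that the longitude of $K$ maps to $0$ in $H_1(N\setminus\mathring{\nu K})$ while $\mu_K$ is primitive there (because $K$ is nullhomologous, $\mu_K$ pairs nontrivially with a Seifert surface), shows $H_1(N\setminus K)\cong H_1(N)\oplus\Z\langle\mu_K\rangle$. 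For $H_2$ one gets $H_2(N\setminus K)\cong H_2(N)$: the class $\partial\nu K = T^2$ bounds in $N\setminus\mathring{\nu K}$ (cap off the meridian disk with a Seifert surface pushed off $K$), so it contributes nothing, and no new $H_2$ classes are created. Substituting into the Künneth formula gives
$$H_2(M\setminus L)\cong H_2(N)\oplus\bigl(H_1(N)\oplus\Z\langle S^1\times\mu_K\rangle\bigr)\cong H_1(N)\oplus H_2(N)\oplus\Z,$$
with the distinguished $\Z$-summand generated by $S^1\times\mu_K$, as claimed.

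Finally, for the statement about the linking class $\lk(L',L)=[L']\in H_2(M\setminus L)$: since $K'$ is nullhomologous in $N$, $[K']=0$ in $H_1(N)$, hence the class $[L']=[S^1\times K']$ lies in the $H_1(S^1)\otimes H_1(N\setminus K)$ factor and projects to $[S^1]\otimes[K']$ under the Künneth splitting; but $[K']\in H_1(N\setminus K)$ is itself a multiple of $\mu_K$ (its image in $H_1(N)$ vanishes), so $[L']$ lies in the $\Z\langle S^1\times\mu_K\rangle$ summand, proving the last assertion. The main obstacle I anticipate is bookkeeping: making the identifications in the Mayer--Vietoris sequence canonical enough that the Künneth splitting visibly carries the geometric torus $S^1\times\mu_K$ to the stated generator, and checking that $\mu_K$ really is primitive in $H_1(N\setminus K)$ rather than merely nonzero — this is where nullhomologousness of $K$ is essential and must be used carefully (a rationally-nullhomologous $K$ would only give primitivity after tensoring with $\Q$).
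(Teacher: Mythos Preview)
Your approach is correct and takes a genuinely different route from the paper. The paper proceeds indirectly: it first applies Poincar\'e--Lefschetz duality together with the universal coefficient theorem to identify $H_2(M\setminus\mathring{\nu L})$ with $F_2(M,L)\oplus T_1(M,L)$ (the free and torsion parts of the relative homology), then uses the \emph{relative} K\"unneth formula $H_k(M,L)\cong H_{k-1}(N,K)\oplus H_k(N,K)$, and finally computes $H_*(N,K)$ from the long exact sequence of the pair. You instead exploit the product structure $M\setminus L=S^1\times(N\setminus K)$ directly, apply the absolute K\"unneth formula, and reduce everything to the standard computation of $H_*(N\setminus K)$ for a nullhomologous knot. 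Your route is more elementary and geometric---the splitting $H_1(N\setminus K)\cong H_1(N)\oplus\Z\langle\mu_K\rangle$ comes transparently from the retraction given by intersection with a Seifert surface, and the identification of $[L']$ as a multiple of $[S^1\times\mu_K]$ is immediate from the cross-product description of K\"unneth classes. The paper's duality argument, on the other hand, is slightly more systematic and would adapt more readily to situations where the complement is not itself a product. Two small corrections: your justification ``everything is free over $\Z$'' for the vanishing of the Tor term is not quite right, since $H_1(N\setminus K)$ may have torsion when $H_1(N)$ does---what you actually need (and have) is that $H_*(S^1)$ is free; and the boundary torus $\partial\nu K$ bounds in $N\setminus\mathring{\nu K}$ simply because it bounds the exterior itself as a $3$-chain, not via the Seifert-surface capping you describe.
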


\begin{proof}
First we combine Poincar\'e duality, excision and the universal coefficient theorem to deduce that
\begin{equation*}
H_2(M\setminus \mathring{\nu L})\cong F_2(M,L)\oplus T_1(M,L),
\end{equation*}
where $F_2(M,L)$ denotes the free part of $H_2(M,L)$ and $T_1(M,L)$ the torsion of $H_1(M,L)$. By applying the relative K\"unneth formula to $S^1\times (N,K)$ we compute
\begin{equation*}
H_k(M,L)\cong H_{k-1}(N,K)\oplus H_k(N,K).
\end{equation*}
Next, we consider the long exact sequence of the pair $(N,K)$
\begin{equation*}
0\longrightarrow H_2(N)\longrightarrow H_2(N,K)\longrightarrow \Z\overset{0}{\longrightarrow} H_1(N)\longrightarrow H_1(N,K)\longrightarrow 0,
\end{equation*}
where the map $\Z\rightarrow H_1(N)$ is the $0$-map since $K$ is nullhomologous in $N$. Thus 
\begin{equation*}
H_1(N,K)=H_1(N)\,\text{ and }\, H_2(N,K)\cong H_2(N)\oplus\Z.
\end{equation*}
Putting everything together and using that $H_2(N)$ is a free group we obtain that $H_2(M\setminus L)$ is isomorphic to $H_1(N)\oplus H_2(N)\oplus \Z$, where we can follow the above isomorphisms to see that the $\Z$-summand is generated by $S^1\times \mu_K$.
Since $L'$ is nullhomologous in $H_2(M)$ and $K'$ is nullhomologous in $H_1(N)$ it follows that $L'$ seen as a homology class in $H_2(M\setminus L)$ is a multiple of $S^1\times \mu_K$.
\end{proof}

%In order to use the linking class as a useful invariant of linked $2$-tori it is easiest to have a canonical (and in particular isotopy invariant) identification of $H_2(M\setminus \nu T_2)$ with $\Z^2$. 

%In the context of Engel structures and even-contact structures this is possible as we will see in the next sections. 

%%%%%%%%%%%%%%%%%%%%%%%%%%%%%% SECTION %%%%%%%%%%%%%%%%%%%%%%%%%%%%%%%%%%%

\section{The self-linking class for transverse tori}
\label{section:selfLinkingClass} 

Let $T$ be a transverse $2$-torus embedded in an Engel $4$-manifold $(M,\D)$. We denote by $T'$ the embedded $2$-torus that is obtained by pushing $T$ into the $\W$-direction. 
%In particular, every transverse torus has to have vanishing self-intersection.
%We remark also that pushing $T$ into the $\D/\W$-direction yields a torus isotopic to $T'$.
The smoothly embedded link of $2$-tori $T\cup T'$ in $M$ is an invariant of the transverse torus $T$. Similar as for the definition of the classical invariants for Legendrian or transverse knots in contact manifolds we define the following invariant for transverse tori.

\begin{defi}
Let $T$ be a transverse $2$-torus in an Engel $4$-manifold $(M,\D)$. Then we define the self-linking class $\slf(T)$ as the homology class
$$\slf(T):=\lk(T',T)\in H_2(M\setminus T).$$
\end{defi}

%To make this into a useful invariant as explained in the previous section, we need to impose that $H_3(M)=H_2(M)=0$ or more general that $T$ is nullhomologous in~$M$.

Next, we will see that for a del Pino--Vogel torus the self-linking class is related to the self-linking number of its profile.

\begin{lem}\label{lem:computationofsl}
Let $T$ be a del Pino--Vogel transverse torus in $(M,\D)$ with a fixed choice of a core $C$ and a profile $K$. 
In the notation introduced in the proof of Lemma~\ref{lem:hom} we can express the self-linking class of $T$ as 
\begin{equation*}
\slf(T)=\slf(K)\cdot\alpha\in H_2(M\setminus T).
\end{equation*}
\end{lem}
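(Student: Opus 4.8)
The plan is to work inside the standard neighborhood $\nu C \cong S^1 \times D_R^3$ provided by Lemma~\ref{lem:standard}, and to identify the push-off $T'$ of $T=C\times K$ in the $\W$-direction with a product push-off $C \times K'$, where $K'$ is the Seifert-framing (i.e.\ self-linking $\slf(K)$) push-off of $K$ inside the contact disk $(D_R^3,\xi=\ker(dy-w\,dx))$. Recall that in the standard Engel model $\Dst=\langle\partial_w,\partial_x+y\partial_z+w\partial_y\rangle$ one has $\E=[\Dst,\Dst]=\ker(dz-y\,dx)$ and the characteristic line field is $\W=\langle\partial_w\rangle$. Thus pushing $T$ in the $\W$-direction means translating each point in the $w$-coordinate; since the defining equation of $T=C\times K$ only involves the $S^1$-coordinate and the $(x,y)$-coordinates of a transverse knot $K$ lying in a slice $\{w=\mathrm{const}\}$, such a translation stays in $\nu C$ for small push-off and yields again a product $C\times K'$ with $K'$ a push-off of $K$ in the $w$-direction.

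The first step is therefore to compute the framing that the $\W$-push-off induces on a transverse knot $K$ in $(D_R^3,\xi)$. A transverse knot in the contact $3$-manifold $(D_R^3, \ker(dy - w\,dx))$ has a canonical framing given by $\xi$ (equivalently, by pushing off along a vector field in $\xi$ transverse to $K$), and the difference between this contact framing and the Seifert framing is by definition $-\slf(K)$ (sign conventions to be fixed consistently). I would check that the $\partial_w$-direction, restricted to a neighborhood of $K$ inside a slice $S^1\times D_R^3$, agrees up to homotopy with a push-off realizing the contact framing of $K$; this is essentially the statement that the del Pino--Vogel construction sends the contact-geometric data of $K$ to the Engel-geometric data of $C\times K$, and it is implicit in~\cite{PV20}. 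Consequently $T' = C \times K'$ where $\lk(K',K) = \slf(K)$ inside $D_R^3$ (after the usual identification of framings with integers via the nullhomologous $K$).

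The second step is the homological bookkeeping, using Lemma~\ref{lem:hom}. By hypothesis $T$ is nullhomologous (being a del Pino--Vogel torus, it bounds $C\times F$ for a Seifert surface $F$ of $K$) and we may assume $H_3(M)=0$ in the relevant situation, so $H_2(M\setminus T)$ contains the rank-$2$ free subgroup generated by the meridians $\alpha = S^1\times \mathrm{pt}\times\partial D^2$ and $\beta = \mathrm{pt}\times S^1\times\partial D^2$ of $T$, in the notation of the proof of Lemma~\ref{lem:hom}; here $\alpha$ is the meridian coming from the disk factor transverse to $K$ in $D_R^3$ — i.e.\ $S^1 \times \mu_K$ — and $\beta = C \times \mathrm{pt}$ is the longitude direction along the core. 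Now $[T'] = [C\times K'] \in H_2(M\setminus T)$: pushing $K'$ to lie in a tubular neighborhood $\nu K \cong K\times D^2 \subset D_R^3$, write $K' = K\times\{\mathrm{pt}\}$ on $\partial(\nu K)$, and express $[K\times\{\mathrm{pt}\}] \in H_1(D_R^3\setminus K)$. Since $K$ is nullhomologous, $H_1(D_R^3\setminus K)\cong\Z$ is generated by $\mu_K$ and $[K\times\{\mathrm{pt}\}] = \slf(K)\cdot\mu_K$ by the definition of the framing. Crossing with $S^1$ (and using the relative K\"unneth computation underlying Lemma~\ref{lem:hom2}) gives $[T'] = \slf(K)\cdot [S^1\times\mu_K] = \slf(K)\cdot\alpha$ in $H_2(M\setminus T)$, which is exactly $\slf(T)$.

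The main obstacle I anticipate is the first step: pinning down precisely that the $\W$-push-off of $C\times K$ corresponds to the contact-framing push-off of $K$, with the correct sign, rather than (say) the Seifert-framing push-off or some other normalization. This requires unwinding the identifications in Lemma~\ref{lem:standard} and in the del Pino--Vogel construction of Lemma~\ref{lem:delPinoVogel} carefully, keeping track of how $\W = \langle\partial_w\rangle$ sits relative to $\xi = \ker(dy-w\,dx)$ on each slice, and confirming that the translation in $w$ does not interact with the $x,y,z$ coordinates in a way that changes the framing. Once that correspondence is established with the right orientation conventions, the remaining homological identifications are routine applications of the Mayer--Vietoris and K\"unneth arguments already carried out in Lemmas~\ref{lem:hom} and~\ref{lem:hom2}.
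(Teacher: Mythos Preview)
Your approach is the paper's: identify $\W=\langle\partial_w\rangle$ in the local model of Lemma~\ref{lem:standard}, so that $T'=S^1\times K'$ with $K'$ the $\partial_w$-push-off of $K$, then compute $[S^1\times K']=\slf(K)\cdot[S^1\times\mu_K]=\slf(K)\cdot\alpha$.

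Two small corrections. First, your aside that $K$ lies in a slice $\{w=\mathrm{const}\}$ is false---$K$ is an honest knot in the $3$-disk $D_R^3$ with coordinates $(x,y,w)$---but this plays no role, since the $\partial_w$-push-off of $S^1\times K$ is $S^1\times K'$ regardless. Second, the ``main obstacle'' you flag dissolves immediately: $\partial_w$ is a global nowhere-vanishing section of $\xi=\ker(dy-w\,dx)$, hence restricts to a nonvanishing section over any Seifert surface of $K$, so by the very definition of the self-linking number the push-off $K'$ satisfies $\lk(K,K')=\slf(K)$. The paper phrases this equivalently via the blackboard framing of the projection to the $wx$-plane; no delicate tracking of conventions is needed.
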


\begin{proof}
We get an identification of $T$ with
\begin{equation*}
S^1\times K\subset (S^1\times D^3_R,\Dst)\subset (M,\D).
\end{equation*}
In this local model $T'$ is defined to be the push-off of $T$ in the $w$-direction. In this situation $T'$ is $S^1\times K'$ where $K'$ is the push-off of $K$ in $w$-direction. If we draw the transverse knot $K\subset (D^3_R,\ker dy-wdx)$ in its front projection to the $wx$-plane, we see that $K'$ is isotopic to the blackboard framing of this projection (see Figure~\ref{fig:transversetori}).

\begin{figure}[htbp] 
	\centering
	\def\svgwidth{0,99\columnwidth}
	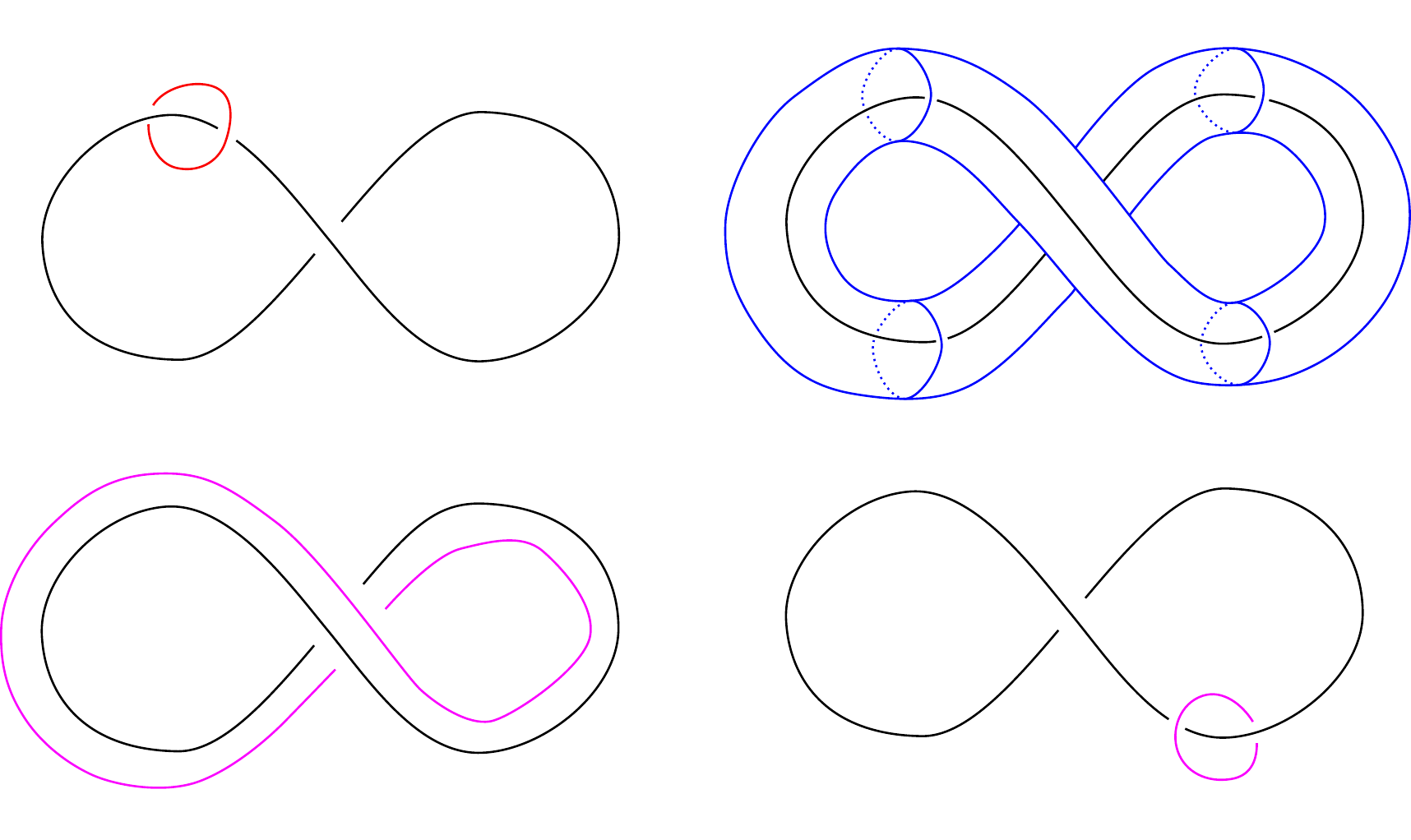
	\caption{Top row: The generators $\alpha$ and $\beta$ are presented in the front projection of an $\R^3$-slice. Bottom row: The push-off $T'$ is isotopic to $\slf(K)\alpha$.}
	\label{fig:transversetori}
\end{figure}
By an easy homology computation in $H_2(M\setminus\mathring{\nu T})$ and the definition of the self-linking number for transverse knots it follows
\begin{align*}
\slf(T)=\lk(T,T')=[T']=[S^1\times K']=\slf(K)\cdot[S^1\times\text{pt}\times\partial D^2]=\slf(K)\cdot\alpha.
\end{align*}
\end{proof}

With this preparation we are ready to give the proof of Theorem~\ref{thm:mainEng}.

\begin{proof}[Proof of Theorem~\ref{thm:mainEng}]
By the local normal form for Engel structures it is enough to work in $(\R^4,\Dst)$. Let $T_0$ be a del Pino--Vogel transverse torus of the form $C\times K_0$ where $C$ is a transverse curve in $(\R^4,\Dst)$ and $K_0$ a transverse knot with self-linking number $\slf(K_0)$ in the $(D^3_R,\ker dy-wdx)$-slice of a standard neighborhood $(\nu C,\Dst)$ of $C$.
We denote by $T_n$ the $n$-fold stabilization of $T_0$, i.e.\ the transverse torus given by $C\times K_n$ where $K_n$ denotes the $n$-fold stabilization of $K_0$ with self-linking number $\slf(K_n)=\slf(K_0)-n$.

Note that since $K_n$ is smoothly isotopic to $K_0$ it follows that $T_n$ is smoothly isotopic to $T_0$, for all $n\in\N_0$. Next, we want to show that the $T_n$ are pairwise non-isotopic as transverse tori by comparing their self-linking classes.
For that we use Lemma~\ref{lem:computationofsl} to compute the self-linking classes for some choice of $\alpha$ and $\beta$ as 
\begin{equation*}
\slf(T_n)=\slf(K_n) \cdot	\alpha=\big(\slf(K_0)-n\big)\cdot\alpha.
\end{equation*}
But since $\alpha$ is a primitive element in a free abelian group its divisibility $\slf(K_n)$ is independent of the choice of $\alpha$ and $\beta$ and thus distinguishes the transverse tori $T_n$ pairwise.
\end{proof}

\section{Formal transverse tori and their self-linking classes}\label{sec:formal}
We recall that a formal Engel structure on $M$ (in the oriented setting) consists of three oriented and cooriented subbundles $(\W^1,\D^2,\E^3)$ of $TM$ such that $\W\subset\D\subset\E\subset TM$.
A \textbf{formal transverse torus} in a formal Engel manifold $(M;\W,\D,\E)$ is an embedding $f\colon T^2\rightarrow M$ covered by a family of injective bundle maps $(F_s)_{s\in [0,1]}\colon TT^2\rightarrow TM$ such that $F_0=Tf$ and $F_1(TT^2)$ is transverse to $\D$. We have obvious inclusions of the space of Engel structures and the space of transverse tori into their formal counterparts. Next, we generalize the definition of the self-linking class to formal transverse tori.

Let $T=(f,(F_s)_{s\in[0,1]})$ be a formal transverse torus in a formal Engel manifold $(M;\W,\D,\E)$ and set $\W_1:=\W$. Since $\W_1$ is transverse to $F_1(TT^2)$ we can apply the homotopy lifting property to obtain unique (up to homotopy) line fields $\W_s$ that are transverse to $F_s(TT^2)$, for all $s\in[0,1]$. Then $\W_0$ is transverse to $T=f(T^2)$ and we denote by $T'$ the $2$-torus which is obtained by pushing $T$ into $\W_0$-direction. We define the \textbf{self-linking class} $\slf(T)$ of $T$ as the homology class
$$\slf(T):=\lk(T',T)\in H_2(M\setminus T).$$
Since $\W_0$ is unique up to homotopy this is well-defined and generalizes the previous definition.

We will now demonstrate that the self-linking class does not change under homotopy through formal transverse tori and also does not change through formal homotopy of the ambient Engel manifold. 

%Thus the collection of self-linking classes of transverse tori is an invariant of the formal homotopy class of the ambient Engel manifold.

\begin{lem}
Let $(\W_t,\D_t,\E_t)_{t\in[0,1]}$ be a family of formal Engel structures on a closed $4$-manifold $M$ and let $(T_t)_{t\in[0,1]}$ be a family of formal transverse $2$-tori such that $T_t$ is formally transverse to $\D_t$. Then there is a smooth ambient isotopy that identifies $H_2(M\setminus T_0)$ and $H_2(M\setminus T_1)$. Under this identification the self-linking class $\slf(T_0)$ of $T_0$ equals the self-linking class $\slf(T_1)$ of $T_1$.
\end{lem}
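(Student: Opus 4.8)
The plan is to use the fact that both the family of formal Engel structures and the family of formal transverse tori are parametrized by the compact interval $[0,1]$, and to package everything into a single ambient isotopy of $M$. First I would observe that the data $(\W_t, \D_t, \E_t; T_t)$ gives, for each $t$, a formal transverse torus $T_t = (f_t, (F_{s,t})_{s\in[0,1]})$ together with the line field $\W_t = \W_{1,t}$ transverse to $F_{1,t}(TT^2)$; applying the homotopy lifting property fibrewise (as in the definition of $\slf$ just above) produces line fields $\W_{s,t}$, depending continuously on both $s$ and $t$, with $\W_{s,t}$ transverse to $F_{s,t}(TT^2)$. In particular $\W_{0,t}$ is a line field transverse to $T_t = f_t(T^2)$ for every $t$, and pushing $T_t$ along $\W_{0,t}$ gives a smoothly varying family of push-offs $T_t'$ with $T_t \cup T_t'$ a smooth link varying continuously in $t$.

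Next I would invoke the smooth isotopy extension theorem. The family of embeddings $T^2 \sqcup T^2 \hookrightarrow M$ given by $t \mapsto (f_t, \text{push-off of } f_t \text{ along } \W_{0,t})$ is a smooth isotopy of the link $T_0 \cup T_0'$ to $T_1 \cup T_1'$ in the closed manifold $M$; since $M$ is closed, the isotopy extension theorem yields an ambient isotopy $\Phi_t \co M \to M$ with $\Phi_0 = \mathrm{id}$ carrying $T_0 \cup T_0'$ to $T_t \cup T_t'$ for each $t$, and in particular carrying $T_0$ to $T_1$ and $T_0'$ to $T_1'$. The diffeomorphism $\Phi_1$ restricts to a homeomorphism $M \setminus T_0 \to M \setminus T_1$, hence induces an isomorphism $(\Phi_1)_* \co H_2(M\setminus T_0) \xrightarrow{\ \cong\ } H_2(M\setminus T_1)$, which is the promised identification.

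Finally I would check that $\slf$ is respected. By definition $\slf(T_0) = [T_0'] \in H_2(M\setminus T_0)$ and $\slf(T_1) = [T_1'] \in H_2(M\setminus T_1)$; since $\Phi_1$ sends $T_0'$ diffeomorphically onto $T_1'$, functoriality of homology gives $(\Phi_1)_*[T_0'] = [T_1']$, i.e.\ $(\Phi_1)_*\slf(T_0) = \slf(T_1)$ as desired. One subtlety worth spelling out is independence of auxiliary choices: the line fields $\W_{s,t}$ from the homotopy lifting property are only unique up to homotopy (for fixed endpoints), so a priori a different lift could give a different push-off $T_t'$; but homotopic transverse line fields give isotopic push-offs, so $[T_t']$ is unchanged, and one should note that this matches the already-established well-definedness of $\slf$ for a single formal transverse torus.

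The main obstacle I expect is the continuous-in-$t$ version of the homotopy lifting step: one must produce the line fields $\W_{s,t}$ depending continuously (or smoothly) on the \emph{pair} $(s,t)$, not just on $s$ for each fixed $t$, so that the resulting family of links is a genuine smooth isotopy to which isotopy extension applies. This is a parametrized lifting problem — lifting the map $(s,t)\mapsto F_{s,t}(TT^2)$ through the fibration whose fibre over a $2$-plane is the space of transverse lines — and it goes through because that fibre is contractible (the transverse lines to a fixed $2$-plane in $\R^4$ form a contractible space), so the relevant section space over the square $[0,1]_s\times[0,1]_t$ with prescribed values on $\{s=1\}$ is nonempty and the lift exists; but it requires a little care to set up cleanly and to see that it reduces to the single-torus argument on each slice. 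Everything else — isotopy extension on a closed manifold, functoriality of homology — is standard.
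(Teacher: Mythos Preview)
Your approach is correct and is a minor reorganization of the paper's argument. The paper first applies isotopy extension to the family $T_t$ alone, obtaining diffeomorphisms $G_t$ with $G_t(T_0)=T_t$, and then argues separately via the homotopy lifting property that $\W_t^0$ is homotopic to $TG_t(\W_0^0)$, so that $(G_t)_*[T_0']=[T_t']$. You instead build the push-offs $T_t'$ first and apply isotopy extension to the whole link $T_t\cup T_t'$, which yields both the identification of $H_2$ and the preservation of $\slf$ in one stroke. Both routes use the same two ingredients (isotopy extension and homotopy lifting); yours packages them a bit more cleanly.

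There is, however, one incorrect justification that you should fix. The fibre in your parametrized lifting problem --- the space of lines in $\R^4$ transverse to a fixed $2$-plane --- is \emph{not} contractible: it is $S^3\setminus S^1$ (oriented case) or $\R P^3\setminus\R P^1$ (unoriented case), and either fibres over the circle of lines in the quotient $2$-plane with contractible fibres, hence is homotopy equivalent to $S^1$. The lift you need nevertheless exists, for the standard reason you are implicitly using elsewhere: the inclusion $T^2\times\{s=1\}\times[0,1]_t \hookrightarrow T^2\times[0,1]_s\times[0,1]_t$ is a deformation retract, so any section of a fibre bundle over the larger space prescribed on the smaller one extends, regardless of the homotopy type of the fibre. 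With that correction your argument goes through.
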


\begin{proof}
Since $T_t$ is a smooth family of embedded $2$-tori in $M$, the isotopy extension theorem guarantees the existence of a smooth ambient isotopy of the $T_t$, i.e. a family of diffeomorphisms $(G_t)_{t\in[0,1]}\colon M \rightarrow M$ such that $G_0=\operatorname{Id}_M$ and $G_t(T_0)=T_t$. By restricting this ambient isotopy to the complements we obtain a family of diffeomorphisms, for simplicity again denoted by $(G_t)_{t\in[0,1]}\colon M\setminus T_0\rightarrow M\setminus T_t$ such that $G_0=\operatorname{Id}_{M\setminus T_0}$. We use the induced isomorphism on homology to identify $H_2(M\setminus T_t)$ with $H_2(M\setminus T_0)$, for any $t\in[0,1]$.  

Now let $T_t=(f_t,(F_s^t)_{s\in[0,1]})$, where $f_t\colon T^2\rightarrow M$ is an embedding and $(F_s^t)_{s\in[0,1]}\colon TT^2\rightarrow TM$ denotes a family of injective bundle maps with $F_0^t=Tf_t$ and $F_1^t(TT^2)$ transverse to $\D_t$. As in the definition of the self-linking number for formal transverse tori we obtain a family of line fields $\W_t^s$ such that $\W_t^1=\W_t$ and $\W_t^s$ is transverse to $F_s^t(TT^2)$. Applying again the homotopy lifting property we see that $\W_t^0$ is homotopic to $TG_t(\W_0^0)$. But this implies that the homology class $[T_0']$ gets mapped under $(G_t)_*$ to $[T_t']$ and thus the self-linking classes of $T_0$ and $T_1$ get identified.
\end{proof}

%%%%%%%%%%%%%%%%%%%%%%%%%%%%%% SECTION %%%%%%%%%%%%%%%%%%%%%%%%%%%%%%%%%%%

\section{Legendrian tori in even contact manifolds}
\label{section:EvenLegendrian} 

In this section we will briefly initiate the study of tangential embeddings of maximal dimension in even contact structures. 

\begin{defi}
An embedded $2$-torus $L$ in an even contact $4$-manifold $(M,\E)$ is called \textbf{Legendrian} if it is everywhere tangent to $\E$, i.e. $TL\subset\E$.
\end{defi}

It is possible to construct examples coming from Legendrian knots in contact $3$-manifolds as follows.

\begin{ex}
Let $K$ be a Legendrian knot in a contact $3$-manifold $(N,\xi)$. Then 
$$L=S^1\times K\subset (S^1\times N,\E:=\partial_\theta\oplus\xi)$$
is a Legendrian torus, where $\theta$ denotes an angular coordinate on $S^1$.
\end{ex}

Observe that in these examples the characteristic line field $\W$ is tangent to $L$. This is always the case: If $p$ is a point in $L$ where $L$ is transverse to $\W$ then $L$ is transverse in a whole open neighborhood of $p$. But this neighborhood projects along $\W$ to a surface $F$ in a contact $3$-manifold $(N,\xi)$ such that $F$ is tangent to $\xi$ implying that such a point $p$ cannot exist. In particular, we do not have any Legendrian tori in the local normal form $(\R^4,\Est)$ of an even contact structure. More generally, it is known that the $\W$-orbits of a generic Engel manifolds are isolated~\cite[Theorem~27]{PP19}, cf.~\cite{Mo99}, and thus a generic Engel manifold contains no Legendrian $2$-torus.

By a similar argument as for transverse surfaces, we argue that a general closed Legendrian surface has to be a $2$-torus if it is orientable and a Klein bottle if it is non-orientable.
We now define a homological invariant for Legendrian tori from the linking class as follows.

\begin{defi}
Let $L$ be a Legendrian torus with trivial normal bundle in $(M,\E)$. We denote by $L'$ the embedded $2$-torus obtained by pushing $L$ into a direction transverse to $\E$, i.e. in the $TM/\E$-direction.
%(which is isotopic to the push-off in $\E/TL$-direction). 
The \textbf{Thurston--Bennequin class} $\tb(L)$ is defined as
$$\tb(L)=\lk(L',L)\in H_2(M\setminus L).$$
\end{defi}

%With this invariant we can give the proof of Theorem~\ref{thm:mainEven} in the same spirit as the proof of Theorem~\ref{thm:mainEng}.

\begin{proof}[Proof of Theorem~\ref{thm:mainEven}]
	Let $K_0$ be a nullhomologous Legendrian knot with Thurston--Bennequin invariant $\tb(K_0)$ in a contact $3$-manifold $(N,\xi)$ and $L_0$ the Legendrian torus $S^1\times K_0$ in $(M=S^1\times N,\E=\partial_\theta\oplus\xi)$. We write $L_n$ for the Legendrian torus $S^1\times K_n$, where $K_n$ is some $n$-fold stabilization of $K_0$ with Thurston--Bennequin invariant $\tb(K_n)=\tb(K_0)-n$. We note that the $L_n$ are all smoothly isotopic since the $K_n$ are smoothly isotopic.
	
By Lemma~\ref{lem:hom2} we know that $\tb(L_n)$ is a multiple of $\mu_{K_0}$. Next we compute this multiple. The push-off $L'_n$ is given by $S^1\times K'_n$ where $K'_n$ is the push-off of $K_n$ in Reeb-direction of $\xi$. A computation in homology yields (see Figure~\ref{fig:Legendretori})
\begin{equation*}
\tb(L_n)=[L'_n]=[S^1\times K'_n]=\tb(K_n)\cdot\mu_{K_0}=\big(\tb(K_0)-n\big)\cdot\mu_{K_0}\in H_2(M\setminus L_0)
\end{equation*}
and thus the Legendrian tori $L_n$ are all pairwise non-isotopic.
\end{proof}

\begin{figure}[htbp] 
	\centering
	\def\svgwidth{0,99\columnwidth}
	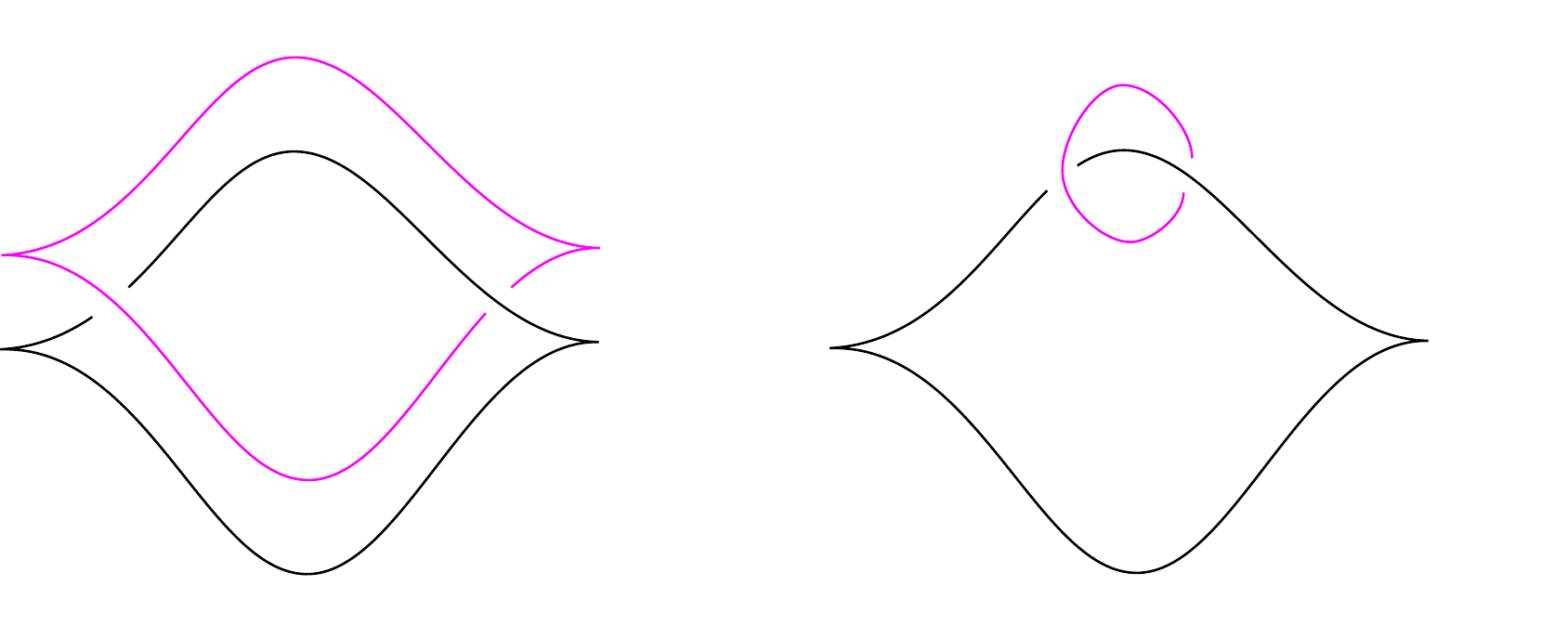
	\caption{The push-off $L'$ is isotopic to $\tb(K)\alpha$.}
	\label{fig:Legendretori}
\end{figure}

%%%%%%%%%%%%%%%%%%%%%%%%%%%%% BIBLIOGRAPHY %%%%%%%%%%%%%%%%%%%%%%%%%%%%%%%%%%%%

%%%%%%%%%%%%%%%%%%%%%%%%%%%%% End DOCUMENT %%%%%%%%%%%%%%%%%%%%%%%%%%%%%%%%%%%%

\end{document}